\definecolor{webgreen}{rgb}{0,.5,0}
\definecolor{webbrown}{rgb}{.6,0,0}
\definecolor{redmaple}{rgb}{.73,.22,.05}
\definecolor{bluemaple}{rgb}{.29,0,.71}
\definecolor{extraproof}{rgb}{0,.5,.5}
\begin{document}

\newtheorem{theorem}{Theorem}
\newtheorem*{theorem*}{Theorem}
\newtheorem{lemma}{Lemma}
\newtheorem{proposition}{Proposition}
\newtheorem*{proposition*}{Proposition}
\newtheorem{corollary}{Corollary}
\newtheorem*{corollary*}{Corollary}
\newtheorem{conjecture}{Conjecture}
\newtheorem{remark}{Remark}
\newtheorem{problem}{Problem}

\theoremstyle{definition}
\newtheorem{definition}{Definition}
\newtheorem*{definition*}{Definition}
\newtheorem{example}{Example}
\newtheorem*{example*}{Example}
\newtheorem{notation}{Notation}

\newcommand{\R}{{\mathbb R}}
\newcommand{\Q}{{\mathbb Q}}
\newcommand{\C}{{\mathbb C}}
\newcommand{\N}{{\mathbb N}}
\newcommand{\Z}{{\mathbb Z}}

\newcommand{\seqnum}[1]{\href{https://oeis.org/#1}{#1}}

\newcommand{\doi}[1]{\href{http://doi.org/#1}{DOI: #1}}

\newcommand{\arxiv}[1]{\href{https://arxiv.org/#1}{arXiv: #1}}

\makeatletter

\renewcommand\@makefntext[1]{%
\setlength\parindent{1em}%
\noindent
\mbox{$^\@thefnmark$~}{#1}}

\makeatother

\begin{center}
\vskip 1cm{\LARGE\bf 
A complete solution of the partition of

\vskip 0,3cm
a number into arithmetic progressions\footnote[2]{JP J. Algebra Number Theory Appl. 53(2) (2022), 109-122. \doi{10.17654/0972555522006}}}
\vskip 0,6cm
\large
F. Javier de Vega\\
King Juan Carlos University\\
Madrid, Spain\\
\href{mailto:javier.devega@urjc.es}{\tt javier.devega@urjc.es} \\
\end{center}

\vskip .2 in

\begin{abstract}
We solve the enumeration of the set $\textrm{AP}(n)$ of partitions of a positive integer $n$ in which the nondecreasing sequence of parts forms an arithmetic progression. In particular, we establish a formula for the number of nondecreasing arithmetic progressions of positive integers with sum $n$. We also present an explicit method to calculate all the partitions of $\textrm{AP}(n)$.
\end{abstract}

\medskip
\noindent 2020 {\it Mathematics Subject Classification}: 11P81, 11A51.

\medskip
\noindent \emph{Keywords: }partition, arithmetic progression, arithmetic generated by a sequence.

\section{Introduction}\label{sec:intro}
A partition of a positive integer $n$ is a nondecreasing sequence of positive integers whose sum is $n$. The summands are called parts of
the partition. We consider the problem of enumerating the set $\textrm{AP}(n)$ of partitions of $n$ in which the nondecreasing sequence of parts forms an arithmetic progression (AP), that is, the nondecreasing arithmetic progressions of positive integers with sum $n$.

A related work was made by Mason \cite{mason} who studied the representation of an integer as the sum of consecutive integers. Bush \cite{bush} extended Mason's results to integers in arithmetic progressions. An analytical approach was made by Leveque \cite{leveque}. A few authors have considered a combinatorial perspective of the problem of enumerating the set $\textrm{AP}(n)$ (see \cite{cook,munagi1,munagi2,nyblom}). The sequence $(|\textrm{AP}(n)|)_{n>0}$ occurs as sequence number \seqnum{A049988} in the Online Encyclopedia of Integer Sequences \cite{oeis}. 

Our paper proposes a novel way to study this problem based on \cite{devega}. The main idea is as follows: the usual divisors trivially solve the problem of the partition of a number into equal parts. Now, for each $k \in \Z$, we will consider a new product mapping ($\odot_{k}$) that will generate an arithmetic ($k$-\textit{arithmetic}) similar to the usual one. In this new arithmetic, the divisors of an integer $n$ will trivially solve the problem of the representation of $n$ as the sum of arithmetic progressions whose difference is $k$. We will prove the following theorem:
\begin{theorem} \label{theorem:1}
\noindent Given a positive integer $n$, let $\tau(n)$ denote the number of positive divisors of $n$, $\textrm{D}_{\textrm{E}}(n)$ denote the set of divisors of $n$ and $\textrm{D}_{\textrm{O}}(n)$ denote the set of divisors of $2n$ except the even divisors of $n$. Then the cardinality of the set $\textrm{AP}(n)$, denoted by $|\textrm{AP}(n)|$, is equal to
\begin{equation*}
\tau(n)+\sum_{\scriptstyle  d \ \in \ \textrm{D}_{\textrm{\tiny E}}(n) \atop \scriptstyle  1 \ < \ d \ \leq \ \sqrt{n}}\left\lfloor \frac{1}{2} (\left\lceil \frac{2n}{d(d-1)} \right\rceil -1)\right\rfloor + \sum_{\scriptstyle  d \ \in \ \textrm{D}_{ \textrm{\tiny O}}(n) \atop \scriptstyle 1 \ < \ d \ < \ \sqrt{2n}}\left\lfloor \frac{1}{2} \left\lceil \frac{2n}{d(d-1)} \right\rceil \right\rfloor.
\end{equation*}
\end{theorem}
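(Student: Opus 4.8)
The plan is to partition $\textrm{AP}(n)$ according to the common difference of the progression. Write a member of $\textrm{AP}(n)$ as $a,a+k,\dots,a+(\ell-1)k$, with first term $a\ge1$, length $\ell\ge1$, and common difference $k\ge0$ (the nondecreasing ordering forces $k\ge0$; for $\ell=1$ set $k=0$), so that the sum condition is $\ell a+k\binom{\ell}{2}=n$. The three cases $k=0$, $k$ odd, $k$ even with $k\ge2$ are mutually exclusive and exhaustive. The case $k=0$ gives $n=\ell a$, so those progressions are in bijection with the divisors $\ell$ of $n$; this accounts for the summand $\tau(n)$. I would then treat the remaining two cases, the heart of the argument being to identify the contribution of the even-difference progressions with the first sum and that of the odd-difference progressions with the second.

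Consider $k$ even, say $k=2j$ with $j\ge1$. Then $\ell a+2j\binom{\ell}{2}=\ell\bigl(a+j(\ell-1)\bigr)=n$, so $\ell\in\textrm{D}_{\textrm{E}}(n)$; moreover $n\ge\ell\cdot1+\ell(\ell-1)=\ell^2$, so $\ell\le\sqrt n$, and $\ell>1$. For fixed such $\ell$ the progression is determined by the pair $(a,j)$ with $a\ge1$, $j\ge1$, $a+j(\ell-1)=n/\ell$, i.e.\ by the integers $j$ with $1\le j\le(n/\ell-1)/(\ell-1)$; there are $\lfloor(n/\ell-1)/(\ell-1)\rfloor$ of them. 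I would then verify the elementary identity
\[
\left\lfloor\frac{n/\ell-1}{\ell-1}\right\rfloor=\left\lfloor\frac12\Bigl(\Bigl\lceil\frac{2n}{\ell(\ell-1)}\Bigr\rceil-1\Bigr)\right\rfloor
\]
(valid for all positive integers, by writing $n/\ell=a(\ell-1)+r$ with $0\le r<\ell-1$ and splitting on $r$), and sum over $\ell\in\textrm{D}_{\textrm{E}}(n)$ with $1<\ell\le\sqrt n$ to obtain the first sum.

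Now take $k$ odd. Doubling the sum condition gives $\ell\bigl(2a+k(\ell-1)\bigr)=2n$, so $\ell\mid2n$; put $s=2n/\ell=2a+k(\ell-1)$. Since $2a=s-k(\ell-1)$ must be a positive even integer and $k$ is odd, necessarily $s\equiv\ell-1\pmod2$; comparing $2$-adic valuations, this parity condition is equivalent to $\ell$ not being an even divisor of $n$, that is, to $\ell\in\textrm{D}_{\textrm{O}}(n)$. One also checks that $s\ge\ell+1$, whence $\ell^2<\ell s=2n$, i.e.\ $\ell<\sqrt{2n}$, and $\ell>1$. For fixed such $\ell$ the progressions correspond to the odd integers $k$ with $1\le k\le(s-2)/(\ell-1)$, of which there are $\bigl\lfloor\tfrac12\bigl((s-2)/(\ell-1)+1\bigr)\bigr\rfloor$; using $s\equiv\ell-1\pmod2$ one then gets
\[
\left\lfloor\frac12\Bigl(\frac{s-2}{\ell-1}+1\Bigr)\right\rfloor=\left\lfloor\frac12\Bigl\lceil\frac{s}{\ell-1}\Bigr\rceil\right\rfloor=\left\lfloor\frac12\Bigl\lceil\frac{2n}{\ell(\ell-1)}\Bigr\rceil\right\rfloor .
\]
Summing over $\ell\in\textrm{D}_{\textrm{O}}(n)$ with $1<\ell<\sqrt{2n}$ gives the second sum, and adding the three contributions proves the formula.

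The conceptual device behind the last two cases is the $k$-arithmetic announced in the introduction: for fixed difference $k$, representing $n$ as the sum of a progression of difference $k$ is governed by the ``$k$-divisors'' of $n$, which for $k$ even reduce to ordinary divisors of $n$ and for $k$ odd to divisors of $2n$ that are not even divisors of $n$; the two sums arise from organizing these $k$-divisors by the length $\ell$ of the progression. I expect the only real difficulty to be bookkeeping: pinning down the parity constraints and the boundary behaviour near $\ell=\sqrt n$ and $\ell=\sqrt{2n}$ so that the floor--ceiling identities hold exactly, and confirming that the stated ranges for $\ell$ include every contributing length and exclude all others.
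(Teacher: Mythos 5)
Your proposal is correct and is essentially the paper's own argument: split $\textrm{AP}(n)$ by the parity of the common difference, organize each class by the length $d$ of the progression (which must be a divisor of $n$, resp.\ a divisor of $2n$ that is not an even divisor of $n$), bound $d$ by $\sqrt{n}$ resp.\ $\sqrt{2n}$ via positivity of the first term, and count the admissible differences for each $d$ with the floor--ceiling expressions built on the critical value $2n/(d(d-1))$. The only presentational difference is that you work directly with $\ell a+k\binom{\ell}{2}=n$ and rederive (via the $2$-adic valuation comparison) the characterization of admissible lengths that the paper imports as Lemma~\ref{lemma:1} from its reference, rather than routing through the $\odot_{k}$/$\oslash_{k}$ formalism; your identities and ranges all check out.
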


We also present an explicit method to calculate all the partitions of $\textrm{AP}(n)$.

In the following section, we present a brief introduction to the methods used in \cite{devega}.

\section{Arithmetic progressions and the usual arithmetic} \label{sec:2}
The usual product $m\cdot n$ on $\Z$ can be viewed as the sum of $n$ terms of an arithmetic progression $(a_n)$ whose first term is $a_{1}=m-n+1$ and whose difference is $d=2$.

\begin{example} \label{example:1}
$6\cdot 3 = (6-3+1)+6+8=3\cdot 6=(3-6+1)+0+2+4+6+8$.
\end{example}

The previous example motivates the following definition.
\begin{definition}[$k$-\textit{arithmetic} product $\odot_{k}$] \label{def:1}
Given $m,k\in \Z$, for all positive integers $n$, we define the following expression 
\begin{equation*}
m\odot_{_{k}} n =(m-n+1)+(m-n+1+k)+\ldots+(m-n+1+k+\stackrel{(n-1)}{\ldots}+k)
\end{equation*}
as the $k$-\textit{arithmetic} product.
\end{definition}

This arithmetic progression can be added to obtain the following formula:
\begin{equation}
\label{eq:1}
m \odot_{_{k}} n =(m-n+1)\cdot n + \frac{n\cdot (n-1)\cdot k}{2}.
\end{equation}

We take \eqref{eq:1} as Definition \ref{def:1} and consider $n \in \Z$.

In connection with the above result, for each $k \in \Z$, the expression ``given a $k$-\textit{arithmetic}'' refers to the fact that we are going to work with integers, the sum, the new product and the usual order. This means that we are going to work on $\mathcal{Z}_k=\{ \Z , + ,\odot_{_{k}} , < \}$. Clearly, $\mathcal{Z}_2$, the $2$-\textit{arithmetic}, will be the usual arithmetic.

\begin{definition}[$k$-\textit{arithmetic} divisor] \label{def:2}
\noindent Given a $k$-\textit{arithmetic}, an integer $d>0$ is called a divisor of $a \ (arith \ k)$ if there exists some integer $b$ such that $a=b \odot_{_{k}} d$. We can write: $d \mid a \ (\textit{arith } k) \Leftrightarrow \exists b \in \Z \text{ such that } b\odot_{k} d=a$.
\end{definition}

In other words, $d$ is the number of terms of the summation that represents the $k$-\textit{arithmetic} product.

\begin{example}
\label{example:2}
Consider the following expression:
\begin{equation*}
9\odot_{3} 8=2+5+8+11+14+17+20+23=100.
\end{equation*}
The number of terms is $8$; hence, we can say that $8$ is a divisor of $100$ in $3$-\textit{arithmetic}, that is, $8$ is a divisor of $100$ (\textit{arith} $3$). Notably, a divisor is always a positive number, and the number $9$ indicates where we should start the summation. However, we cannot be sure that $9$ is a divisor of $100$ (\textit{arith} $3$).
\end{example}

To characterize the set of divisors, we define the $k$-\textit{arithmetic} quotient:

\begin{definition}[$k$-\textit{arithmetic} quotient $\oslash_{k}$] \label{def:3}
\noindent Let $a,b \in \Z$, $b\neq 0$. Given a $k$-\textit{arithmetic}, an integer $c$ is called a quotient of $a$ divided by $b$ (\textit{arith} $k$) if and only if $c\odot_{_{k}} b=a$. We write: $a \oslash_{k} b = c \Leftrightarrow c\odot_{_{k}} b=a$.
\end{definition}

Also, we can express the $k$-\textit{arithmetic} quotient with the usual one:
\begin{equation} \label{eq:2}
a \oslash_{k} b=\frac{a}{b}+(b-1)\cdot(1-\frac{k}{2}).
\end{equation}

We must consider $\oslash_{k}$ in the following manner. If we want to write $a$ as the sum of $b$ terms of an arithmetic progression, then the quotient will give us the place to start the summation. For instance, if we want to express $57$ as the sum of $6$ terms of an arithmetic progression whose difference is $3$, we can do: $57\oslash_{3} 6=57/6+5\cdot(1-3/2)=7$. Hence, $57=7 \odot_{3} 6$. The first term is $7-6+1=2$, and the solution is $2+5+8+11+14+17=57$.

\begin{corollary}
\label{corollary:1}
Let $b>0$, $b$ is a divisor of $a$ (\textit{arith} $k$) $\Leftrightarrow$ $a \oslash_{k} b$ is an integer.
\end{corollary}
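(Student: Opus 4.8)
The plan is to chain together Definitions \ref{def:2} and \ref{def:3}, using Proposition \ref{proposition:1} only to guarantee that the quotient $a\oslash_{k}b$ is a single, well-defined rational number. First I would unwind Definition \ref{def:2} with $d=b$: the statement ``$b$ is a divisor of $a$ (\textit{arith} $k$)'' means precisely that there exists an integer $c$ with $c\odot_{k}b=a$. Next, Definition \ref{def:3} says that an integer $c$ satisfies $c\odot_{k}b=a$ if and only if $c=a\oslash_{k}b$. Combining these two equivalences, the existence of an integer $c$ with $c\odot_{k}b=a$ is the same as the assertion that $a\oslash_{k}b$ equals some integer, i.e.\ that $a\oslash_{k}b\in\Z$. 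Both implications of the corollary fall out of this: a witness $c$ of divisibility gives $a\oslash_{k}b=c\in\Z$, and conversely if $a\oslash_{k}b=c\in\Z$ then this same $c$ witnesses $b\mid a$ (\textit{arith} $k$).

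The only point that needs care is that ``there exists an integer equal to $a\oslash_{k}b$'' is genuinely the same as ``$a\oslash_{k}b\in\Z$'', which requires $a\oslash_{k}b$ to be unambiguously defined. This is exactly what Proposition \ref{proposition:1} supplies: since $b\neq 0$, the identity $a\oslash_{k}b=\frac{a}{b}+(b-1)\cdot(1-\frac{k}{2})$ exhibits the quotient as one specific rational number. Equivalently, from \eqref{eq:1} the map $c\mapsto c\odot_{k}b=(c-b+1)b+b(b-1)k/2$ is affine in $c$ with nonzero leading coefficient $b$, so $c\odot_{k}b=a$ has a unique solution $c$; hence writing $a\oslash_{k}b$ for that solution is legitimate.

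I do not expect any real obstacle: the result is a short bookkeeping consequence of the definitions once the well-definedness of $\oslash_{k}$ is noted. The hypothesis $b>0$ is needed merely so that Definition \ref{def:2} applies (divisors are positive by convention, counting the number of terms of the progression) and plays no further role in the equivalence, which would work for any $b\neq 0$.
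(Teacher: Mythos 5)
Your proof is correct and follows essentially the same route as the paper: chaining Definition \ref{def:2} and Definition \ref{def:3} to convert ``there exists an integer $c$ with $c\odot_{k}b=a$'' into ``$a\oslash_{k}b\in\Z$''. Your additional remark that Proposition \ref{proposition:1} (or the affineness of $c\mapsto c\odot_{k}b$ with leading coefficient $b\neq 0$) is what makes $a\oslash_{k}b$ a single well-defined quantity is a point the paper's one-line proof leaves implicit, but it does not change the argument.
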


Consider Example \ref{example:2}: $100=9 \odot_{3} 8$ but $9$ is not a divisor of $100$ (\textit{arith} $3$) because $100 \oslash_{3} 9=100/9+8 \cdot (1-3/2)=64/9 \notin \Z$. That is, $100$ is not the sum of $9$ terms of an arithmetic progression of integers whose difference is $3$.

If we have the divisors of $n$ (arith $k$), then we have the arithmetic progressions whose difference is $k$ with sum $n$: if $d$ is a divisor of $n$ (\textit{arith} $k$), there must exist an integer $a$ such that $n=a \odot_{k} d$. Then, $n=(a-d+1)+ (a-d+1+k)+ \ldots+(a-d+1+(d-1)k)$; hence $n$ is the sum of an arithmetic progression of integers whose difference is $k$. On the other hand, if $n$ is the sum of an arithmetic progression of integers whose difference is $k$, there must exist $a,d \in \Z$, $d>0$ such that $n=a+(a+k)+(a+2k)+\ldots+(a+(d-1)k)$. Then, $n=(a+d-1)\odot_{k} d$ and $d$ is a divisor of $n$ (\textit{arith} $k$).

Now it is clear that the problem to calculate the arithmetic progressions of integers whose difference is $k$ with sum $n$ is equivalent to calculate the set of divisors of $n$ (\textit{arith} $k$).

For the upcoming lemma and the rest of this paper, we use the following notation for even and odd numbers.
\begin{notation} \label{notation:1} We write the set of even and odd numbers as follows: 
\begin{itemize}
	\item $E= \{ \ldots ,-4,-2,0,2,4,6,\ldots \}$.
	\item $O= \{ \ldots ,-3,-1,1,3,5,7,\ldots \}$.
\end{itemize}
\end{notation}

The following lemma characterizes the divisors of $n$ (\textit{arith} $k$). The proof appears in \cite{devega}.

\begin{lemma}
\label{lemma:1}
Given a $k$-\textit{arithmetic} and $a \in \Z$, the divisors of $a \ (arith  \ k)$ are:
\begin{enumerate}
	\item The usual divisors of $a$ if $k \in E$.
	\item The usual divisors of $2a$ except the even usual divisors of $a$ if $k \in O$.
\end{enumerate}
\end{lemma}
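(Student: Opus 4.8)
The plan is to prove Lemma~\ref{lemma:1} by reducing the question ``is $d$ a divisor of $a$ ($\textit{arith }k$)?'' to an integrality condition via Corollary~\ref{corollary:1}, and then analyzing that condition according to the parity of $k$. By Corollary~\ref{corollary:1}, a positive integer $d$ divides $a$ ($\textit{arith }k$) if and only if $a \oslash_{k} d \in \Z$, and by Proposition~\ref{proposition:1} this quantity equals $\frac{a}{d} + (d-1)(1 - \frac{k}{2})$. So everything hinges on when $\frac{a}{d} + (d-1)(1-\frac{k}{2})$ is an integer.

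First I would dispose of the even case. If $k \in E$, write $k = 2\ell$; then $(d-1)(1-\frac{k}{2}) = (d-1)(1-\ell) \in \Z$ for every integer $d$, so $a \oslash_{k} d \in \Z$ if and only if $\frac{a}{d} \in \Z$, i.e. if and only if $d$ is a usual divisor of $a$. This gives part (1) immediately.

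Next I would handle the odd case $k \in O$. Write $k = 2\ell+1$, so $1 - \frac{k}{2} = \frac{1-2\ell}{2}$ and hence $(d-1)(1-\frac{k}{2}) = \frac{(d-1)(1-2\ell)}{2}$. Then $a\oslash_k d = \frac{a}{d} + \frac{(d-1)(1-2\ell)}{2} = \frac{2a + d(d-1)(1-2\ell)}{2d}$. Since $1-2\ell$ is odd, the parity of $d(d-1)(1-2\ell)$ matches that of $d(d-1)$, which is always even; so $d(d-1)(1-2\ell) \equiv 0 \pmod{2}$ always, but more to the point I want to know when $2d \mid 2a + d(d-1)(1-2\ell)$. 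The cleaner route: $a\oslash_k d \in \Z$ iff $\frac{a}{d} + \frac{(d-1)(1-2\ell)}{2} \in \Z$. Split on the parity of $d$. If $d$ is odd, then $\frac{(d-1)(1-2\ell)}{2} = \frac{d-1}{2}(1-2\ell) \in \Z$, so the condition reduces to $\frac{a}{d}\in\Z$, i.e. $d$ is a usual (necessarily odd) divisor of $a$ — equivalently an odd divisor of $2a$. If $d$ is even, then $\frac{(d-1)(1-2\ell)}{2}$ is a half-integer (numerator odd), so we need $\frac{a}{d}$ to also be a half-integer, i.e. $\frac{2a}{d}\in\Z$ but $\frac{a}{d}\notin\Z$; that is, $d \mid 2a$ and $d \nmid a$. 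Combining: $d$ divides $a$ ($\textit{arith }k$) iff ($d$ odd and $d\mid 2a$) or ($d$ even, $d\mid 2a$, $d\nmid a$) — which is exactly ``$d$ is a divisor of $2a$ that is not an even divisor of $a$,'' giving part (2).

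The main obstacle is bookkeeping the parity case-split in the odd-$k$ case cleanly — in particular making sure the description ``divisors of $2a$ except even divisors of $a$'' is stated as a partition of cases (odd divisors of $2a$ are automatically divisors of $a$, hence never excluded; the only excluded elements are the even divisors of $a$, all of which do divide $2a$). Once the half-integer dichotomy ($\frac{(d-1)(1-2\ell)}{2}\in\Z$ iff $d$ odd) is pinned down, the rest is routine verification, and I would also remark that the answer is independent of the particular odd (resp. even) value of $k$, which is the only potentially surprising point and follows transparently from the computation above.
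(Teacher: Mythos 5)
Your argument is correct and complete. Note that the paper itself does not prove Lemma~\ref{lemma:1} at all --- it defers the proof to the cited reference \cite{devega} --- so there is no in-paper proof to compare against; but your derivation uses exactly the machinery the paper has already set up (Proposition~\ref{proposition:1} and Corollary~\ref{corollary:1}), and the key steps check out: for $k$ even the correction term $(d-1)(1-\tfrac{k}{2})$ is always an integer, reducing the condition to $d\mid a$; for $k$ odd the term $\tfrac{(d-1)(1-2\ell)}{2}$ is an integer precisely when $d$ is odd and a half-integer precisely when $d$ is even, which yields ``$d\mid a$'' in the odd-$d$ case and ``$d\mid 2a$ but $d\nmid a$'' in the even-$d$ case. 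Your closing observation that odd divisors of $2a$ coincide with odd divisors of $a$ (so the stated set ``divisors of $2a$ except even divisors of $a$'' matches your case split exactly) is the right bookkeeping point, and the independence of the answer from the particular value of $k$ within each parity class is indeed the only subtle feature, which your computation makes transparent.
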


\begin{example} \label{example:4}
Express the number $20$ in all possible ways as a sum of an arithmetic progression whose difference is $3$.

\noindent \textit{Solution.} The divisors of $20$ (\textit{arith} $3$) are the usual divisors of $40$ except the even usual divisors of $20$: $\{1, \cancel{2}, \cancel{4}, 5, 8,\cancel{10}, \cancel{20}, 40\}$. We obtain:
\begin{itemize}
\item $d=1\Rightarrow 20\oslash_{3}1=20 \Rightarrow 20=20\odot_{_{3}} 1 \Rightarrow 20=20$.
\item $d=5\Rightarrow 20\oslash_{3}5=2\Rightarrow 20=2\odot_{_{3}} 5 \Rightarrow$ $20=-2+1+4+7+10$.
\item $d=8 \Rightarrow 20\oslash_{3}8=-1\Rightarrow 20=-1\odot_{_{3}}8\Rightarrow 20=-8-5-2+1+4+7+10+13$. 
\item $d=40 \Rightarrow 20\oslash_{3}40=-19 \Rightarrow 20=-19\odot_{_{3}} 40\Rightarrow 20=-58-55- \ldots +56+59$.	
\end{itemize}
\end{example}

\begin{definition}
Given a positive integer $n$ and $k \in \Z$, let $\textrm{D}_{k}(n)$ denote the set of divisors of $n$ (\textit{arith} $k$).
\end{definition}

By Lemma \ref{lemma:1}, we have two options:
\begin{itemize}
	\item If $k \in E$, $\textrm{D}_{k}(n)$ is the set of the usual divisors of $n$. $\textrm{D}_{E}(n)$ denote this case.
	\item If $k \in O$, $\textrm{D}_{k}(n)$ is the set of the usual divisors of $2n$ except the even usual divisors of $n$. $\textrm{D}_{O}(n)$ denote this case.
\end{itemize}

Lemma \ref{lemma:1} clarifies the problem of the representation of a number as the sum of an arithmetic series. We can easily obtain the results previously studied by other authors. For instance, the following corollary appears in \cite{bush}.
\begin{corollary} \label{corollary:2} 
Let $n=2^{e}p_{1}^{e_{1}}\cdots p_{r}^{e_{r}}$ be any positive integer, where $p_{1},\ldots,p_{r}$ are distinct odd primes. The number of different ways in which $n$ can be expressed as the sum of an arithmetic series of integers with a specified odd common difference, $k$, is twice the number of distinct positive odd divisors of $n$.
\end{corollary}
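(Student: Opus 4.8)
The plan is to translate the statement into a pure divisor count. Since the common difference $k$ is odd, Lemma~\eqref{lemma:1} together with the discussion following it shows that the ways of writing $n$ as a sum of an arithmetic series of integers with common difference $k$ are in bijection with the elements of $\textrm{D}_{O}(n)$, the set of usual divisors of $2n$ with the even divisors of $n$ deleted: such a divisor $d$ records the number of terms, and Proposition~\eqref{proposition:1} recovers the starting point of the progression from the quotient $n\oslash_{k}d$. So it suffices to compute $|\textrm{D}_{O}(n)|$ and check that it equals twice the number of positive odd divisors of $n$.

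First I would write $n=2^{e}m$ with $m=p_{1}^{e_{1}}\cdots p_{r}^{e_{r}}$ odd, so that $2n=2^{e+1}m$. By multiplicativity of $\tau$, the number of divisors of $2n$ is $(e+2)\,\tau(m)$ and the number of divisors of $n$ is $(e+1)\,\tau(m)$. The odd divisors of $n$ are precisely the divisors of $m$, so there are $\tau(m)$ of them, and therefore $n$ has $(e+1)\tau(m)-\tau(m)=e\,\tau(m)$ even divisors.

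Subtracting, $|\textrm{D}_{O}(n)|=(e+2)\,\tau(m)-e\,\tau(m)=2\,\tau(m)$, which is exactly twice the number of positive odd divisors of $n$. By the bijection recalled in the first paragraph, this is the number of representations of $n$ as the sum of an arithmetic series of integers with common difference $k$, which proves the corollary.

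Every step here is an elementary count, so I do not anticipate a genuine obstacle. The only point to handle with a little care is the set subtraction in the definition of $\textrm{D}_{O}(n)$: one should observe that every even divisor of $n$ indeed divides $2n$, so the deletion is well posed, and that we must remove the even divisors of $n$ and not the even divisors of $2n$ — getting this bookkeeping wrong is the one place the argument could go astray.
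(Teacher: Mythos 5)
Your proof is correct and follows essentially the same route as the paper: both reduce the statement via Lemma~\eqref{lemma:1} to computing $|\textrm{D}_{O}(n)|=\tau(2n)-\tau_{E}(n)$ and evaluate this as $(e+2)\tau(m)-e\,\tau(m)=2\tau(m)$ using multiplicativity of $\tau$. The only difference is cosmetic: the paper additionally remarks that exactly half of the elements of $\textrm{D}_{O}(n)$ are even, which is not needed for the corollary itself.
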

\begin{proof}
Let $\tau_{O}(n)=(e_{1}+1)\cdot \ldots \cdot (e_{r}+1)$ denote the number of odd usual divisors of $n$. Let $\tau_{E}(n)=e\cdot(e_{1}+1)\cdot \ldots \cdot (e_{r}+1)$ denote the number of even usual divisors of $n$. We have to calculate the number of elements of $D_{k}(n)$, $k \in O$, denoted by $|D_{k}(n)|$. By Lemma \ref{lemma:1},
\begin{equation*}
|D_{k}(n)|=\tau(2n)-\tau_{E}(n)=(e+2)\prod_{i=1}^{r}(e_{i}+1) -e\prod_{i=1}^{r}(e_{i}+1)=2 \tau_{O}(n).
\end{equation*}
\noindent Also, if $k \in O$, $|D_{k}(n)|=(\tau_{E}(2n)-\tau_{E}(n))+\tau_{O}(2n)=(\tau_{O}(n))+\tau_{O}(n)$. Hence, exactly half of the elements of $D_{k}(n)$ are even and the other half are odd.
\end{proof}

Let us now study the main result of this paper.

\section{Remarks and examples} \label{sec:3}

If we think about Example \ref{example:4}, then we have a constructive method to solve the main problem of this paper. We are interested in the partitions of a positive integer $n$ in which the nondecreasing sequence of parts forms an arithmetic progression. Therefore, the first term must be a positive integer. Let us consider some remarks.
\begin{remark} \label{remark:1}
The case $k=0$ produces the trivial partitions. There are $\tau(n)$ trivial partitions in this case.
\end{remark}

\begin{remark} \label{remark:2}
The divisor $d=1 \in \textrm{D}_{k}(n)$ always produces the trivial partition $n=n$ for all $k$.
\end{remark}

\begin{remark} \label{remark:3}
If $k \in E$, $k>0$, then we have only to study the divisors $d \in \textrm{D}_{E}(n)$ such that $1<d \leq \sqrt{n}$.
\end{remark}
\begin{proof}
 We are interested in the partitions whose first term is greater than $0$. If $d \in \textrm{D}_{k}(n)$, then we can calculate $a \in \Z$ such that $a\odot_{k}d=n$. By \eqref{eq:2}, $a=n\oslash_{k}d=n/d+(d-1)(1-k/2)$. The first term of the partition is $a-d+1=n/d+(d-1)(1-k/2)-d+1$. If the first term is greater than $0$, then we have the following expression:
	\begin{equation} \label{eq:3}
  \frac{n}{d}+(d-1)(1-\frac{k}{2})-d+1 > 0 \Leftrightarrow k<\frac{2n}{d(d-1)}.
  \end{equation}
\noindent By \eqref{eq:3}, if $k \in E$, $k>0$ and $\frac{2n}{d(d-1)}\leq 2$, then there will be no partition with a positive first term.
Hence, if $d>\sqrt{n}$, then we will not have an element of $\textrm{AP}(n)$.
\end{proof}

\begin{remark} \label{remark:4}
If $k \in O$, $k>0$, then we have only to study the divisors $d \in \textrm{D}_{O}(n)$ such that $1<d < \sqrt{2n}$.
\end{remark}

\begin{proof}
By \eqref{eq:3}, if $k \in O$, $k>0$ and $\frac{2n}{d(d-1)}\leq 1$, then there will be no partition with a positive first term.
Hence, if $d>\sqrt{2n}$, then we will not have an element of $\textrm{AP}(n)$.

If $d=\sqrt{2n}$, then $\sqrt{2n}$ is even and $\sqrt{2n} \mid n$. Thus, by Lemma \ref{lemma:1}, we do not have to consider this case.
\end{proof}

\begin{notation} \label{notation:2}
Let $n$ a positive integer. Let $d \in D_{k}(n)$, $d>1$. Then we denote by $k_{d}$ the critical value $\frac{2n}{d(d-1)}$.
\end{notation}

Let us look at all these remarks with an example:

\begin{example} \label{ex:4}
Calculate $\textrm{AP}(6)$.\\
\textit{Solution.} We are going to consider three cases:
\begin{itemize}
	\item $k=0 \Rightarrow \textrm{D}_{0}(6)=\{1,2,3,6\}$. We have the following possibilities:
	\begin{itemize}
		\item[$\star$] $d=1\Rightarrow 6\oslash_{0}1=6 \Rightarrow 6\odot_{_{0}} 1=6 \Rightarrow 6=6$.
		\item[$\star$] $d=2\Rightarrow 6\oslash_{0}2=4\Rightarrow 4\odot_{_{0}} 2=6 \Rightarrow 3+3=6$.
		\item[$\star$] $d=3\Rightarrow 6\oslash_{0}3=4\Rightarrow 4\odot_{_{0}} 3=6 \Rightarrow 2+2+2=6$.
		\item[$\star$] $d=6\Rightarrow 6\oslash_{0}6=6\Rightarrow 6\odot_{_{0}} 6=6 \Rightarrow 1+1+1+1+1+1=6$.
	\end{itemize}
We have $\tau(6)=4$ trivial partitions in this case. We do not need to repeat this trivial case anymore. Note that the case $k=0$ includes the trivial partition produced by the divisor $d=1$. In the following cases, we will consider the divisors of $\textrm{D}_{k}(6)$ greater than $1$.
\end{itemize}
\begin{itemize}
	\item $k>0, \ k \in E \Rightarrow \ \textrm{D}_{E}(6)=\{1,2,3,6\}$. By Remark \ref{remark:3}, we have only to study the divisors $d \in \textrm{D}_{E}(n)$ such that $1<d \leq \sqrt{6}$, hence we need to study the divisor $d=2$.
\begin{itemize}
\item $d=2 \Rightarrow k_{2}=\frac{2 \cdot 6}{2(2-1)}=6$. By \eqref{eq:3}, the divisor $d=2$ produces partitions of $\textrm{AP}(6)$ in cases such that $k \in E$, $0<k<6$. Hence, $d=2$ produces $2$ partitions ($k=2, \ k=4$).
\item[$\star$] $d=2, \ k=2 \Rightarrow 6\oslash_{2}2=3\Rightarrow 3\odot_{_{2}} 2=6 \Rightarrow 2+4=6$.
\item[$\star$] $d=2, \ k=4, \Rightarrow 6\oslash_{4}2=2\Rightarrow 2\odot_{_{4}} 2=6 \Rightarrow 1+5=6$.
\end{itemize}
\end{itemize}
\begin{itemize}
\item $k>0, \ k \in O \Rightarrow \textrm{D}_{O}(6)=\{1, \cancel{2}, 3, 4, \cancel{6}, 12\}$. By Remark \ref{remark:4}, we have only to study the divisors $d \in \textrm{D}_{O}(n)$ such that $1<d < \sqrt{12}$, hence we need to study the divisor $d=3$.
\begin{itemize}
\item $d=3 \Rightarrow k_{3}=\frac{2 \cdot 6}{3(3-1)}=2$. By \eqref{eq:3}, the divisor $d=3$ produces partitions of $\textrm{AP}(6)$ in cases such that $k \in O$, $0<k<2$. Hence $d=3$ produces $1$ partition ($k=1$).
\item[$\star$] $d=3, \ k=1 \Rightarrow 6\oslash_{1}3=3\Rightarrow 3\odot_{_{1}} 3=6 \Rightarrow 1+2+3=6$.
\end{itemize}
\end{itemize}

\noindent Hence $|\textrm{AP}(6)|=4+2+1=7$.
\end{example}

Let us do a slightly more complicated example. 
\begin{example} Calculate $|\textrm{AP}(100)|$.

\noindent \textit{Solution.} We will write the divisors by pairs. By Remarks \ref{remark:3} and \ref{remark:4}, we will have to study the first row of divisors only.
\begin{itemize}
	\item $k=0$. There are $\tau(100)=9$ trivial partitions. 
\end{itemize}
\begin{itemize}
	\item $k>0$, $k \in E$: $\textrm{D}_{E}(100)= \Big \{ \begin{array}{lllll} 1 & 2 & 4 & 5 & 10 \\ 
100 & 50 & 25 & 20 &  \end{array} \Big \}$.

\medskip
\begin{tabular}{c|c|c|c}
$d$ & $k_{d}=\frac{2\cdot 100}{d\cdot(d-1)}$ & $k \in E$ and $0<k<k_{d}$ & Number of $\textrm{AP}$ partitions \\ \hline
$2$ & $100$ & $k=2$, $k=4$, \ldots, $k=98$ & $49$ \\
$4$ & $16.\hat{6}$ & $k=2$, $k=4$, \ldots, $k=16$ & $8$ \\
$5$ & $10$ & $k=2$, $k=4$, $k=6$, $k=8$ & $4$ \\
$10$ & $2.\hat{2}$ & $k=2$ & $1$ \\ \hline
\end{tabular}
\end{itemize}

\begin{itemize}
\item $k>0$, $k \in O$: $\textrm{D}_{O}(100)= \Big \{ \begin{array}{llllll} 1 & \cancel{2} & \cancel{4} & 5 & 8 & \cancel{10} \\ 
200 & \cancel{100} & \cancel{50} & 40 & 25 & \cancel{20}  \end{array} \Big \}$.

\medskip
\begin{tabular}{c|c|c|c}
$d$ & $k_{d}=\frac{2\cdot 100}{d\cdot(d-1)}$ & $k \in O$ and $0<k<k_{d}$ & Number of $\textrm{AP}$ partitions \\ \hline
$5$ & $10$ & $k=1$, $k=3$, \ldots, $k=9$ & $5$ \\
$8$ & $3.57\ldots$ & $k=1$, $k=3$ & 2\\ \hline
\end{tabular}
\end{itemize}

\medskip
\noindent Hence $|\textrm{AP}(100)|=9+49+8+4+1+5+2=78$.

If we want to calculate a concrete partition, for instance $d=5$, $k=7$, then we can do: $100 \oslash_{7} 5=100/5+4(1-7/2)=10 \Rightarrow100=10 \odot_{7} 5=6+13+20+27+34$. 
\end{example}

We can use the floor and the ceiling functions to count the even and odd numbers in each case.
\begin{remark} \label{remark:5}
The number of positive even numbers less than a real $x>0$ is given by the expression $\lfloor \frac{1}{2} (\lceil x \rceil -1)\rfloor$,
where $\lfloor x \rfloor$ is the greatest integer $\leq x$ and $\lceil x \rceil$ is the smallest integer $\geq x$.
\end{remark}

\begin{remark} \label{remark:6}
The number of positive odd numbers less than a real $x>0$ is given by the expression $\lfloor \frac{1}{2} \lceil x \rceil\rfloor$.
\end{remark}

With all of the above, we can prove Theorem \ref{theorem:1}.

\section{Proof of Theorem \ref{theorem:1}} \label{sec:4}
Let us summarize the method explained in the previous section. Then we have to study three cases to calculate $|\textrm{AP}(n)|$:
\begin{itemize}
	\item Case $k=0$: there are $\tau(n)$ trivial partitions (Remark \ref{remark:1}). The divisor $d=1$ always produces the trivial partition $n=n$ (Remark \ref{remark:2}). This partition is counted in this case only. In the following cases, we will consider the divisors of $\textrm{D}_{k}(n)$ greater than 1.
	
	\medskip
	\item Case $k>0$, $k \in E$: we have only to study the divisors $d \in \textrm{D}_{E}(n)$ such that $1<d\leq \sqrt{n}$ (Remark \ref{remark:3}). By \eqref{eq:3}, each divisor produces partitions in the cases such that $k \in E$, $0<k<k_{d}$. By Remark \ref{remark:5}, there are $\lfloor \frac{1}{2} (\lceil k_{d} \rceil -1)\rfloor$ partitions of $\textrm{AP}(n)$ in this case.
	
	\medskip
	\item Case $k>0$, $k \in O$: we have only to study the divisors $d \in \textrm{D}_{O}(n)$ such that $1<d< \sqrt{2n}$ (Remark \ref{remark:4}). By \eqref{eq:3}, each divisor produces partitions in the cases such that $k \in O$, $0<k<k_{d}$. By Remark \ref{remark:6}, there are $\lfloor \frac{1}{2} \lceil k_{d} \rceil\rfloor$ partitions of $\textrm{AP}(n)$ in this case.
\end{itemize}
Then,
\begin{equation*}
|\textrm{AP}(n)|=\tau(n)+\sum_{\scriptstyle  d \ \in \ \textrm{D}_{\textrm{\tiny E}}(n) \atop \scriptstyle  1 \ < \ d \ \leq \ \sqrt{n}}\left\lfloor \frac{1}{2} (\left\lceil k_{d} \right\rceil -1)\right\rfloor + \sum_{\scriptstyle  d \ \in \ \textrm{D}_{ \textrm{\tiny O}}(n) \atop \scriptstyle 1 \ < \ d \ < \ \sqrt{2n}}\left\lfloor \frac{1}{2} \left\lceil k_{d} \right\rceil \right\rfloor. \tag*{\qed}
\end{equation*}

Once the problem is understood and solved, the only difficulty in calculating $\textrm{AP}(n)$ is to obtain the set of divisors of $2n$.

Figure \ref{fig:1} looks like the famous Goldbach's comet.

\begin{figure}[ht]
\centering
\includegraphics[width=0.7\textwidth]{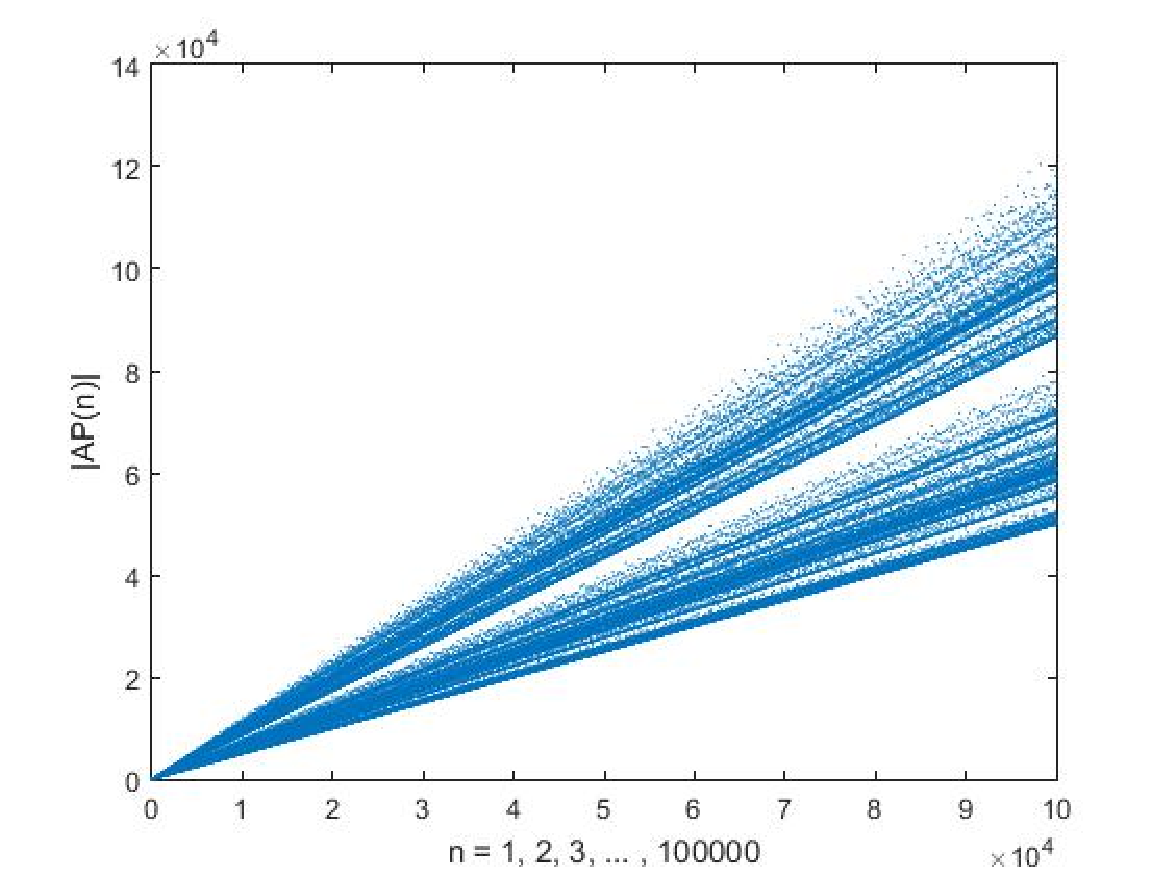}
\caption{$|\textrm{AP}(n)|$, $n=1,2,\ldots,100000$}
\label{fig:1}       
\end{figure}

\section{On the lengths of the partitions of \textrm{AP}(n)} \label{sec:5}
A question proposed in \cite{munagi1,munagi2} deals with the different lengths of the partitions of $\textrm{AP}(n)$. If we consider a partition of $\textrm{AP}(n)$ as a $d$-tuple $(n_{1},n_{2},\ldots,n_{d})$, then we can define the set $\textrm{APdiv}(n)$ as the different lengths $d$ of the partitions of $\textrm{AP}(n)$. Since the trivial partitions have lengths equal to the divisors of $n$, $\textrm{D}_{E}(n) \subseteq \textrm{APdiv}(n)$. By Theorem \ref{theorem:1}, the different lengths will be the elements of $\textrm{D}_{E}(n)$ (usual divisors) and the even elements of $\textrm{D}_{O}(n)$ that produce partitions of $\textrm{AP}(n)$.

\begin{corollary} \label{corollary:3}
$\displaystyle |\textrm{APdiv}(n)|=\tau(n) + \sum_{\scriptstyle  d \ \in \ E \cap \textrm{D}_{ \textrm{\tiny O}}(n) \atop \scriptstyle 1 \ < \ d \ < \ \sqrt{2n}}1$.
\end{corollary}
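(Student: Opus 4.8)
The plan is to identify $\textrm{APdiv}(n)$ explicitly as a disjoint union of two sets whose sizes are manifestly $\tau(n)$ and the stated sum, and then conclude by additivity of cardinality. First I would recall from the proof of Theorem~\eqref{theorem:1} exactly which lengths arise. The case $k=0$ contributes the trivial partitions, whose lengths are precisely the usual divisors of $n$ (for each $d\mid n$ the partition $n=\tfrac nd+\cdots+\tfrac nd$ has $d$ parts), so $\textrm{D}_{E}(n)\subseteq\textrm{APdiv}(n)$. In the case $k>0$, $k\in E$, every partition has length $d$ for some $d\in\textrm{D}_{E}(n)$, so no new length appears. In the case $k>0$, $k\in O$, a partition has length $d$ for some $d\in\textrm{D}_{O}(n)$ with $1<d<\sqrt{2n}$; when $d$ is odd it is an odd divisor of $2n$, hence an odd divisor of $n$, hence already in $\textrm{D}_{E}(n)$. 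Thus the only candidates for genuinely new lengths are the even elements $d$ of $\textrm{D}_{O}(n)$ with $1<d<\sqrt{2n}$.

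Next I would check two things about such an even $d$. First, that it really is realised as a length: by Notation~\eqref{note:2} and \eqref{eq:2}, $d$ produces a partition of $\textrm{AP}(n)$ for every odd $k$ with $0<k<k_{d}$, and the hypothesis $d<\sqrt{2n}$ gives $d(d-1)<d^{2}<2n$, so $k_{d}=\tfrac{2n}{d(d-1)}>1$; hence $k=1$ is an admissible odd value and $d\in\textrm{APdiv}(n)$. Second, that such a $d$ is genuinely new, i.e.\ $d\notin\textrm{D}_{E}(n)$: since $d$ is even and lies in $\textrm{D}_{O}(n)$, it is a divisor of $2n$ that is not an even divisor of $n$, so (being even) it cannot divide $n$ at all, whence $d\notin\textrm{D}_{E}(n)$. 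Conversely no odd $d\in\textrm{D}_{O}(n)$ can be new, by the first paragraph.

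Combining these observations,
\[
\textrm{APdiv}(n)=\textrm{D}_{E}(n)\ \sqcup\ \bigl\{\, d\in E\cap\textrm{D}_{O}(n)\ :\ 1<d<\sqrt{2n}\,\bigr\},
\]
a disjoint union, and taking cardinalities gives $|\textrm{APdiv}(n)|=\tau(n)+\sum_{d\in E\cap\textrm{D}_{O}(n),\,1<d<\sqrt{2n}}1$, as claimed. I expect the main obstacle to be the first verification in the second paragraph: one must use that the restriction $1<d<\sqrt{2n}$ is exactly strong enough to force $k_{d}>1$, and hence to guarantee that each even $d$ in range actually contributes a length (not merely that it might); the disjointness and the ``no new length otherwise'' claims are then routine bookkeeping from Lemma~\eqref{lemma:1} and the definitions of $\textrm{D}_{E}(n)$ and $\textrm{D}_{O}(n)$.
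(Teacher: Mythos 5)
Your proof is correct and follows essentially the same route as the paper: both identify $\textrm{APdiv}(n)$ as the union of $\textrm{D}_{E}(n)$ with the even elements of $\textrm{D}_{O}(n)$ in the range $1<d<\sqrt{2n}$, and take cardinalities. You supply details the paper leaves implicit — that $d<\sqrt{2n}$ forces $k_{d}>1$ so each such even $d$ is actually realised (via $k=1$), and that the union is disjoint because an even non-divisor of $n$ cannot lie in $\textrm{D}_{E}(n)$ — which strengthens rather than alters the argument.
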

\begin{proof}
By Theorem \ref{theorem:1}, 
\begin{equation*}
\textrm{APdiv}(n)=\textrm{D}_{E}(n) \cup  \{ d \in E\cap \textrm{D}_{O}(n): 1<d<\sqrt{2n} \},
\end{equation*}
and the result follows.
\end{proof}

\begin{example} \label{example:7}
Calculate $|\textrm{APdiv}(500)|$.\\
\textit{Solution.} Since $500=2^{2}\cdot 5^{3}$, $\tau(500)=3\cdot 4=12$.

\begin{equation*}
\textrm{D}_{O}(500)= \Big \{ \begin{array}{llllllll} 1 & \cancel{2} & \cancel{4} & 5 & 8 & \cancel{10} & \cancel{20} & 25 \\ 1000 & \cancel{500} & \cancel{250} & 200 & 125 & \cancel{100}  & \cancel{50}  & 40 \end{array} \Big \}.
\end{equation*}

\noindent Then, $\textrm{APdiv}(500)=\textrm{D}_{E}(500) \cup \{ 8 \}$ and $|\textrm{APdiv}(500)|=12+1=13$.
\end{example}

The sequence $(|\textrm{APdiv}(n)|)_{n>0}$ occurs as sequence number \seqnum{A175239} in the Online Encyclopedia of Integer Sequences \cite{oeis}. 

\section{Conclusion} \label{sec:6}
The novel way of studying a partition problem by calculating the divisors of a number in an arithmetic similar to the usual one is the main contribution of this paper. The study of the arithmetic generated by $\odot_{k}$ ($k$-\textit{arithmetic}) is interesting by itself. An improvement of \cite{devega} proposes to study the arithmetic generated by any integer sequence $(a_{n})_{n>0}$. Try to convert a partition problem to a divisors problem in an arithmetic generated by an integer sequence is a topic that needs more work.

\subsection*{Acknowledgements}
This work was supported by King Juan Carlos University under grant C2PREDOC2020.

{\small

}

\end{document}